% ----------------------------------------------------------------
% AMS-LaTeX Paper ************************************************
% **** -----------------------------------------------------------
\documentclass{amsart}
\usepackage{graphicx}
\usepackage{cite}
\usepackage{amssymb}
% ----------------------------------------------------------------
\vfuzz2pt % Don't report over-full v-boxes if over-edge is small
\hfuzz2pt % Don't report over-full h-boxes if over-edge is small
% THEOREMS -------------------------------------------------------
\newtheorem{thm}{Theorem}[section]

\newtheorem{lem}[thm]{Lemma}

\theoremstyle{definition}

\theoremstyle{remark}
\newtheorem{rem}[thm]{Remark}
\numberwithin{equation}{section}
% MATH -----------------------------------------------------------

% ----------------------------------------------------------------
\begin{document}

\title[$L^p$ polyharmonic Robin problems on Lipschitz domains]{$L^p$ polyharmonic Robin problems on Lipschitz domains}%
\author{Weifeng Li, Pei Dang, Zhihua Du, Guoan Guo and Yumei Li}%
\address{College of Mathematics and Physics, China University of Geosciences, Wuhan 430074, China}%
\email{liwf@cug.edu.cn}%
\address{Faculty of Information Technology, Macau University of Science and Technology,
Macao}%
\email{pdang@must.edu.mo}%
\address{Department of Mathematics, Jinan University, Guangzhou 510632, China, and Faculty of Information Technology, Macau University of Science and Technology,
Macao}%
\email{tzhdu@jnu.edu.cn and zhdu@must.edu.mo}%
\address{Colledge of Mathematics, Nanjing University of Posts and Telecommunications, Nanjing 210023, China}%
\email{guoguoan@njupt.edu.cn}%
\address{Department of Mathematics, Jinan University, Guangzhou 510632, China}%
\email{lym1049@163.com}%

%\thanks{This work was carried out from 2011 when the author visited the department of mathematics of University of Macau. The author was partially supported by the NNSF grants (Nos. 10871150, 11126065, 11401254) and by (Macao) FDCT
%014/2008/A1 and 056/2010/A3. He greatly
%appreciates various supports and helps of his three advisors: Professors Drs. Jinyuan Du, Heinrich Begehr and Tao Qian, especially to Prof. Dr. Begehr who intoduced me the polyharmonic Dirichlet problems studied in this article when I was a Ph.D candidate at Berlin on 2007. Thanks also to Dr. Jinxun Wang and Prof. Dr. Ming Xu for their valued discussions. The work was completed when the author visited at Department of Mathematics, Temple University by the invitation of Prof. Dr. Irina Mitrea on the basis of the State Scholarship Fund Award of China. Thanks to Prof. Dr. Mitrea and also for the hospitality given by both University of Macau and Temple University.}%

\subjclass{31B10, 31B30, 35J40}%
\keywords{polyharmonic equations, Robin problems, polyharmonic fundamental solutions, multi-layer potentials}%
%\thanks{This work was supported by Macao Science and Technology Development Fund, MSAR. Ref.
%045/2015/A2, and NSFC (Nos. 11126065, 11401254 and 11701597).}

%\date{}%
%\dedicatory{}%
%\commby{}%
% ----------------------------------------------------------------
\begin{abstract}
In this paper, we study a class of boundary value problems (BVPs) with Robin conditions in some $L^p$ spaces for polyharmonic equation on Lipschitz domains. Utilizing polyharmonic fundamental solutions, these Robin BVPs are solved by the method of layer potentials. The crucial ingedients of our approach are the classical single layer potential and its higher order analog (which are called multi-layer $S$-potentials), and the main results generalize ones of second order (Laplacian) case to higher order (polyharmonic) case.
\end{abstract}
\maketitle
% ----------------------------------------------------------------
\section{Introduction}

In classical potential theory, it is well known that three classes of boundary value problems for Laplacian are important. They are Dirichlet, Neumann and Robin problems stated respectively as follows
\begin{equation*}{(D)}\hspace{5mm}\begin{cases}\Delta u=0\,\,in\,\, D,\\
 u=f\,\,on\,\,\partial D,
  \end{cases}
  \end{equation*}

\begin{equation*}{(N)}\hspace{5mm}\begin{cases}\Delta u=0\,\,in\,\, D,\\
 \frac{\partial u}{\partial N}=g\,\,on\,\,\partial D,
  \end{cases}
  \end{equation*}

 \begin{equation*}{(R)}\hspace{5mm}\begin{cases}\Delta u=0\,\,in\,\, D,\\
 \frac{\partial u}{\partial N}+bu=h\,\,on\,\,\partial D,
  \end{cases}
  \end{equation*}
 where $D$ is a domain in $\mathbb{R}^{n}$ with boundary $\partial D$, $\Delta$ is the classical Laplacian for $\mathbb{R}^{n}$ (that is, $\Delta=\sum_{j=1}^{n}\frac{\partial^{2}}{\partial x_{j}^{2}}$), $f,g,b,h$ are given functions in some certain function spaces defined on $\partial D$ such as $C(\partial D)$, $L^{p}(\partial D)$, etc., $\frac{\partial u}{\partial N}$ denotes the outward normal derivative of $u$, and $b$ is ordinarily called Robin coefficient.

Historically, there were a lot of investigations on these second order elliptic BVPs for different typed boundary data in various domains in $\mathbb{R}^{n}$ or some certain manifolds \cite{hel,f1,st1}. Many theories and techniques were invented in order to attack these boundary value problems such as Green functions, subharmonic functions, potential theory, calculus of variations, Fredholm theory of integral equations and so on. Due to the techniques of modern harmonic analysis, in recent thirty years, there were a great deal of activities for studying these fundamental harmonic BVPs and related problems with minimal smoothness boundary data (such as, $L^{p}$, $H^{p}$, BMO etc.) in non-smooth domains (for instance, $C^{1}$, Lipschitz domains etc.) in $\mathbb{R}^{n}$ or some certain Riemann manifolds \cite{dk,dk1,fjv,d,v0,k}.

Although the progress of harmonic boundary value problems is spectacular, the studies of polyharmonic boundary value problems are far from reaching a satisfactory level of good understanding. There are many differences about boundary value problems for Laplace (more generally, second order elliptic partial differential) equation and plolyharmonic (more generally, higher order elliptic partial differential) equation. For instance, it is well known that maximum principle is a crucial tool to assure the existence and uniqueness of second order BVPs, while there is not a good maximum principle in the general case of higher order partial differential equations \cite{ag,mp,pv2}. So many methods developed for second order case will not be invalid in higher order case. To attach higher order elliptic BVPs, some new ideas and techniques must be introduced. In addition, the neccesary boundary conditions to obtain a well-posed theory of BVPs are determined in second order case more easily than higher order case since the optional boundary conditions of the latter are more plentiful than the ones of the former. Absence of positivity, abundance of boundary conditions and many other differences result in more difficulties in the investigation of polyharmonic (generally, higher order elliptic) BVPs than harmonic (in general, second order elliptic) BVPs. Just as the role of Laplace equation taking on the study of second order elliptic PDEs, polyharmonic equation is vital to the study for higher order elliptic PDEs as a model equation at least in the case of higher even order.

In 2008, Begehr, Du and Wang studied a class of polyharmonic Dirichlet boundary value problems (some times, also called Requier problems \cite{r}) with H\"{o}lder continuous data in the unit disc as follows
\begin{equation*}{(PHD)}\hspace{5mm}\begin{cases}\Delta^{m} u=0\,\,in\,\, \mathbb{D},\\
 \Delta^{j} u=f_{j}\,\,on\,\,\partial \mathbb{D},
  \end{cases}
  \end{equation*}
where $0\leq j<m$, $\mathbb{D}$ is the unit disc in the complex plane, its boundary $\partial \mathbb{D}$ is the unit circle, and $f_{j}$ are H\"older continuous functions defined on $\partial \mathbb{D}$. There they found that the solutions can be represented by some integrals in terms of some kernel functions and  tried to utilize complex integrals method to explicitly formulate the kernels. However their techniques to obtain kernel functions are complicated and indeed valid only for some small $m$. In \cite{dgw}, Du, Guo and Wang introduced some new ideas and got explicit expressions of the kernel functions in a unified way. From then on, these kernel functions are called higher order Poisson kernels because they are higher order analog of the classical Poisson kernel. Then Du and his collaborators also obtained higher order Poisson kernels of some regular domains such as the unit ball, the upper half plane and space, and applied higher order Poisson kernels to solve the corresponding polyharmonic Dirichlet problems (that is, the above PHD problem) in these regular domains (\cite{dkw,dqw1,dqw2,dqw3}). Furthermore, in \cite{du}, excepting the above polyharmonic Dirichlet problems with $L^{p}$ boundary data, Du also studied the polyharmonic Neumann and regularity problems on bounded Lipschitz domains and Lipschitz graph domains as follows:
\begin{equation*}{(PHN)}\hspace{5mm}\begin{cases}\Delta^{m} u=0\,\,in\,\, D,\\
 \frac{\partial}{\partial N}\Delta^{j} u=g_{j}\,\,on\,\,\partial D,
  \end{cases}
  \end{equation*}
where $0\leq j<m$, $D$ is a bounded Lipschitz domain or Lipschitz graph domain in $\mathbb{R}^{n}$, $g_{j}\in L^{p}(\partial D)$ for some $1<p<\infty$, $0\leq j<m$;
\begin{equation*}{(PHRe)}\hspace{5mm}\begin{cases}\Delta^{m} u=0\,\,in\,\, D,\\
 \Delta^{j} u=l_{j}\,\,on\,\,\partial D,
  \end{cases}
  \end{equation*}
where $0\leq j<m$, $D$ is a bounded Lipschitz domain and Lipschitz graph domain in $\mathbb{R}^{n}$, $l_{j}\in L_{1}^{p}(\partial D)$ for some $1<p<\infty$, $0\leq j<m$.

The key techniques to solve all these polyharmonic BVPs are higher order Poisson kernels, polyharmonic fundamental solutions and some properties (mainly, the invertible properties) of the classical (single and double) layer potentials. The invertible properties of traces (or boundary values) of the classical single and double layer potentials were developed by Fabes, Verchota, Dahlberg, Kenig and others in 1980s. In fact, Dahlberg, Kenig and Vertocha and their collaborators solved  Dirichlet, Neumann, and regularity problems and some related problems for Laplace equation in $L^{p}$ on Lipschitz domains by the method of layer potentials \cite{dk,dk1,fjv,v0}.

In 2004, Lanzani and Shen studied the aforementioned Robin problem (\emph{R}) for the Laplacian in $L^{p}$ on Lipschitz domains. They established the invertibility of a crucial operator $\mathcal{T}$ which is the Robin boundary value of single layer potential under certain conditions that the Robin coefficient $b$ satisfies. Then they obtained the unique solution of Robin problem with $L^{p}$ data on bounded Lipschitz domains and $C^{1}$ domains in term of the inverse of the operator $\mathcal{T}$ and the single layer potential (The details can be seen in \cite{ls} and the $L^{2}$ case in the setting of Clifford analysis can be also seen in \cite{l} due to Lanzani).

By applying some modern harmonic analysis techniques and the method of layer potentials, more earlier, Cohen, Gosselin, Dahlberg, Kenig, Verchota and Pipher and many others had already studied some boundary value problems for biharmonic and polyharmonic (even higher order elliptic) equations in non-smooth domains (mainly, Lipschitz and $C^{1}$ domains)\cite{cg,dkv,pv1,pv2,pv3,v1,v2,v}. For example, in \cite{dkv}, Dahlberg, Kenig and Verchota investigated the following biharmonic BVP (which is called Dirichlet problem by them; in fact, the boundary data of this BVP is a kind of mixed boundary value which is coupled by Dirichlet and Neumann boundary values) on Lipschitz domains:
\begin{equation*}\begin{cases}\Delta^{2} u=0\,\,in\,\, D,\\
  u=f\,\,on\,\,\partial D,\\
 \frac{\partial}{\partial N} u=g\,\,on\,\,\partial D,\\
 M(\nabla u)\in L^{p}(\partial D),
  \end{cases}
  \end{equation*}
where $f\in L^{p}_{1}(\partial D)$, $g\in L^{p}(\partial D)$ for some $p\in (2-\epsilon, 2+\epsilon)$, $\epsilon=\epsilon(D)$ is a constant depending only on the Lipschitz domain $D$, $M(\nabla u)$ is the non-tangential maximal function of $\nabla u$ which is defined in next section.

Taking the types of boundary values into consideration, whether Begehr, Du and Wang's or Dahlberg, Kenig, Verchota and Pipher's results are only one-case studies. They can not be included and inferred each other (So it does for almost all results on polyharmonic BVPs presented in the literature). In fact, they are respectively independent and non-intersecting in some sense.

In present paper, we will also do a one-case study for BVPs for polyharmonic equations in $L^{p}$ on Lipschitz domains. More precisely, we will solve the following polyharmonic Robin problems:
\begin{equation*}{(PHR)}\hspace{5mm}\begin{cases}\Delta^{m} u=0\,\,in\,\, D,\\
 \frac{\partial }{\partial N}\Delta^{j}u+b_{j}\Delta^{j}u=h_{j}\,\,on\,\,\partial D,
  \end{cases}
  \end{equation*}
where $0\leq j<m$, $D$ is a bounded Lipschitz domain in $\mathbb{R}^{n}$, $h_{j}\in L^{p}(\partial D)$ for some $1<p<\infty$, Robin coefficients $b_{j}\in L^{s}(\partial D)$ for some $1<s<\infty$, $b_{j}\geq0$ and $b_{j}\not\equiv 0$ on $\partial D$, $0\leq j<m$.

Our approch is based on the second order results in \cite{ls} due to Lanzani and Shen and the theory of multi-layer $\mathcal{S}$-potentials constructed in terms of polyharmonic fundamental solutions in \cite{du}.

Ordinarily, a trivial idea to solve the above polyharmonic BVP is iteration. That is, in order to solve the polyharmonic BVPs, the original BVPs will be decomposed into  a series of homogeneous and inhomogeneous harmonic BVPs by constantly iterating the corresponding harmonic solutions. For instance, when $b_{j}=0$, $0\leq j\leq m-1$ and $m=2$, the above PHR problem is degenerated to be a biharmonic Neumann problem as follows:
\begin{equation*}\begin{cases}\Delta^{2} u=0\,\,in\,\, D,\\
 \frac{\partial }{\partial N}\Delta u=h_{1}\,\,on\,\,\partial D,\\
  \frac{\partial }{\partial N}u=h_{0}\,\,on\,\,\partial D.
  \end{cases}
  \end{equation*}
By the method of iteration, the following two harmonic BVPs should be successively solved. More precisely,
\begin{equation*}\begin{cases}\Delta w=0\,\,in\,\, D,\\
 \frac{\partial }{\partial N}w=h_{1}\,\,on\,\,\partial D
  \end{cases}
  \end{equation*}
and
\begin{equation*}\begin{cases}\Delta u=w\,\,in\,\, D,\\
 \frac{\partial }{\partial N}u=h_{0}\,\,on\,\,\partial D.
  \end{cases}
  \end{equation*}

Although these harmonic BVPs with $L^{p}$ data could be solved for some appropriate $p\in (1,\infty)$ by Jerison-Kenig's theory (for details, see \cite{jk1,jk2}), the solution of the original polyharmonic BVPs will be expressed in a complex form since some iterations of solution appear in the solving process. The innovation of our approach in present paper lies in the introduction of multi-layer $\mathcal{S}$-potentials (which are higher order analog of the classical singular layer potential) by studying the polyharmonic fundamental solutions. Then the aforementioned polyharmonic Robin problem will be neatly solved, and its solution can be explicitly expressed in terms of some multi-layer $\mathcal{S}$-potentials.

\section{Preliminaries}
In this section, we give some notations and recall some basic results on multi-layer $\mathcal{S}$-potentials and harmonic Robin problem which will be applied throughout the paper.

Let $D$ be a bounded Lipschitz domain in $\mathbb{R}^{n}$, $n\geq 3$. That is, $D$ is a bounded and connected domain with connected boundary $\partial D$, and $\partial D$ is locally the graph of a Lipschitz domain. More precisely, for any $Q\in \partial D$, there exists a circular coordinate cylinder $Z_{r}(Q)=\{X=(\underline{x}, x_{n})\in \mathbb{R}^{n}:|\underline{x}|<r, |x_{n}|<(1+L)r\}$ with $L,r>0$ which is centered at $Q$ (viz., $Q=0$ in this cylinder) and whose bases has positive distances from $\partial D$, and Lipschitz functions $\varphi_{Q}: \mathbb{R}^{n-1}\rightarrow \mathbb{R}$ such that
\begin{description}
  \item[(i)] $|\varphi_{Q}(\underline{x})-\varphi_{Q}(\underline{y})|\leq \mathcal{L}_{Q}|\underline{x}-\underline{y}|$ for any $\underline{x}, \underline{y}\in \mathbb{R}^{n-1}$ with $0<\mathcal{L}_{Q}<\infty$;
  \item[(ii)] $Z_{r}(Q)\cap D=\{(\underline{x}, x_{n}): x_{n}>\varphi_{Q}(\underline{x})\}$;
  \item[(iii)] $Z_{r}(Q)\cap \partial D=\{(\underline{x}, x_{n}): x_{n}=\varphi_{Q}(\underline{x})\}$;
  \item[(iv)] $Q=(\underline{0}, \varphi_{Q}(\underline{0}))$,
\end{description}
where $\underline{x}=\{x_{1}, \ldots, x_{n-1}\}\in \mathbb{R}^{n-1}$. By (\textbf{iv}) and $Q=0$, $\varphi_{Q}(\underline{0})=0$. This requirement always can be attained by some translations and rotations of the circular coordinate cylinders.  Let $\mathcal{L}=\sup_{Q\in \partial D}\mathcal{L}_{Q}$, by compactness, it is easy to see that $\mathcal{L}$ is finite. $\mathcal{L}$ is usually called the Lipschitz constant (or Lipschitz character) of $D$.

If $D=\{(\underline{x},x_{n})\in \mathbb{R}^{n}:\,\,x_{n}>\varphi(\underline{x})\}$ in which $\varphi:\mathbb{R}^{n-1}\rightarrow \mathbb{R}$ is Lipschitz continuous, then $D$ is called a Lipschitz graph domain. Sometimes some investigations of elliptic BVPs were considered on Lipschitz graph domains (see \cite{dk,pv1,du}).

Let $\Gamma_{\gamma}(Q)$ denote the non-tangentially cone-shaped region with vertex at $Q\in\partial D$ in $D$,

\begin{equation}
\Gamma_{\gamma}(Q)=\{X\in D: |X-Q|<\gamma\, \mathrm{dist}(X,\partial
D)\}
\end{equation}
in which $\gamma>1$. The non-tangential maximal functions of $F$ are defined as follows:
\begin{equation}
M(F)(Q)=\sup_{X\in \Gamma_{\gamma}(Q)}|F(X)|,\,\,Q\in
\partial D.
\end{equation}
The non-tangential limit of $F$ means that $\displaystyle\lim_{\substack{X\rightarrow P\\ X\in
\Gamma_{\gamma}(P), P\in\partial D}}\!\!\!\!\!F(X)$ exist. It is worthy to note that the non-tangential
maximal functions and limits
are defined for all $\gamma>1$ throughout this article, so we always elide the subscript
$\gamma$ in proper places and denote $\Gamma_{\gamma}(\cdot)$ only
by $\Gamma(\cdot)$. The boundary data in what follows are uniformly non-tangential.

\subsection{Single layer potential and harmonic Robin problem}
Denote the single layer potential of $f$ as follows
\begin{align}
\mathcal{S}f(X)&=\int_{\partial D}\Gamma(|X-Q|)f(Q)d\sigma(Q)\\
&=\frac{1}{\omega_{n-1}(2-n)}\int_{\partial D}\frac{1}{|X-Q|^{n-2}}f(Q)d\sigma(Q),\,\,\,X\in \mathbb{R}^{n},\nonumber
\end{align}
where $\Gamma(X)=\frac{1}{\omega_{n-1}(2-n)}|X|^{2-n}$ is the fundamental solution of Laplacian for $n\geq 3$,  $\omega_{n-1}=\frac{2\pi^{\frac{n}{2}}}{\Gamma(\frac{n}{2})}$ is the surface area of the unit sphere $S^{n-1}$ in $\mathbb{R}^{n}$, and $d\sigma$ is the surface measure of $\partial D$.

It is well known that the non-tangentially positive and negative boundary values of $\frac{\partial}{\partial N_{P}}\mathcal{S}f$ exist and can be formulated by
\begin{equation}
\left(\frac{\partial}{\partial N_{P}}\mathcal{S}\right)_{+}f(P)=\lim_{\substack{X\rightarrow P\\X\in \Gamma_{\gamma}(P)}}\frac{\partial}{\partial N_{P}}\mathcal{S}f(X)=-\frac{1}{2}f(P)+K^{*}f(P),\,\,P\in \partial D
\end{equation}
and
\begin{equation}
\left(\frac{\partial}{\partial N_{P}}\mathcal{S}\right)_{-}f(P)=\lim_{\substack{X\rightarrow P\\X\in \widetilde{\Gamma}_{\gamma}(P)}}\frac{\partial}{\partial N_{P}}\mathcal{S}f(X)=\frac{1}{2}f(P)+K^{*}f(P),\,\,P\in \partial D
\end{equation}
for any $f\in L^{p}(\partial D)$ with $1<p<\infty$, where $\frac{\partial}{\partial N_{P}}\mathcal{S}f=\langle\nabla \mathcal{S}f, N_{P}\rangle$, $\widetilde{\Gamma}_{\gamma}(P)=\{X\in \mathbb{R}^{n}\setminus D: |X-P|<\gamma\, \mathrm{dist}(X,\partial
D)\}$,
\begin{align}
K^{*}f(P)&=\mathrm{P.V.}\, \frac{1}{\omega_{n-1}}\int_{\partial D}\frac{\langle P-Q, N_{P} \rangle}{|P-Q|^{n}}f(Q)d\sigma(Q)\\
&=\lim_{\epsilon\rightarrow 0+}\frac{1}{\omega_{n-1}}\int_{\partial D\setminus\{|Q-P|<\epsilon\}}\frac{\langle P-Q, N_{P} \rangle}{|P-Q|^{n}}f(Q)d\sigma(Q),\nonumber
\end{align}
and $N_{P}$ denotes the unit outward normal of $\partial D$ at $P$.

By the deep result (i.e., the $L^{2}$ boundedness of Cauchy integrals on Lipschitz curves) due to Coifman, McIntosh and Meyer \cite{cmm}, the operator $K^{*}$ is bounded from $L^{p}(\partial D)$ to $L^{p}(\partial D)$ for any $1<p<\infty$. In order to apply the method of layer potentials (indeed, the boundary integral method) to solve Dirichlet and Neumann problems on Lipschitz domains, Kenig, Dahlberg and Verchota studied the invertibility of the traces of the classical layer potentials (including single and double layer potentials). More precisely, part of their results are stated in the following lemma.
\begin{lem}{(\cite{dk1} and \cite{v0})}
Let $D$ be a bounded Lipschitz domain (or Lipschitz graph domain), their exists an $\epsilon>0$ depending only on the dimension $n$ and the Lipschitz character $\mathcal{L}$ of $D$, such that the operators $\pm\frac{1}{2}I+K:L^{p}(\partial D)\rightarrow L^{p}(\partial D)$ are invertible for any $2-\epsilon<p<\infty$; the operators $\pm\frac{1}{2}I+K^{*}:L^{p}(\partial D)\rightarrow L^{p}(\partial D)$ are invertible for any $1<p<2+\epsilon$. Moreover, the single layer potential $\mathcal{S}:L^{p}(\partial D)\rightarrow L_{1}^{p}(\partial D)$ is invertible for any $1<p<2+\epsilon$, where $K$ is the adjoint operator of $K^{*}$.
\end{lem}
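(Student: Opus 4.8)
Since this lemma restates results of Verchota \cite{v0} and Dahlberg--Kenig \cite{dk1}, the plan is to reconstruct their layer-potential argument, whose engine is the Rellich identity. First I would fix a smooth vector field $\alpha$ on $\mathbb{R}^{n}$ with $\langle\alpha,N_{Q}\rangle\geq c_{0}>0$ for $\sigma$-a.e. $Q\in\partial D$ (such a field exists because $D$ is Lipschitz). For $u$ harmonic in $D$ with $M(\nabla u)\in L^{2}(\partial D)$, integrating $\mathrm{div}(\alpha\abs{\nabla u}^{2}-2(\alpha\cdot\nabla u)\nabla u)$ over $D$ --- carried out first on smooth domains exhausting $D$ and then passed to the limit using $M(\nabla u)\in L^{2}(\partial D)$ --- yields $\int_{\partial D}\langle\alpha,N\rangle\abs{\nabla u}^{2}\,d\sigma=2\int_{\partial D}(\alpha\cdot\nabla u)\,\partial_{N}u\,d\sigma+R$, where $R$ is a solid integral that is absent for constant $\alpha$ and otherwise dominated by $\norm{M(\nabla u)}_{L^{2}(\partial D)}^{2}$. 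Decomposing $\nabla u$ on $\partial D$ into its tangential part $\nabla_{T}u$ and normal part $\partial_{N}u$ and comparing the two sides gives the two-sided estimate $\norm{\nabla_{T}u}_{L^{2}(\partial D)}\approx\norm{\partial_{N}u}_{L^{2}(\partial D)}$ with constants depending only on $n$ and $\mathcal{L}$; the same holds in the exterior domain $D^{-}$ under the decay normalization at infinity.

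Next I would apply this with $u=\mathcal{S}f$, $f\in L^{2}(\partial D)$. Because $\mathcal{S}f$ is continuous across $\partial D$ together with its tangential gradient, $\nabla_{T}\mathcal{S}f$ is unambiguous, while the two normal derivatives are precisely $(-\frac{1}{2}I+K^{*})f$ and $(\frac{1}{2}I+K^{*})f$. Hence $\norm{(\frac{1}{2}I+K^{*})f}_{L^{2}}\approx\norm{\nabla_{T}\mathcal{S}f}_{L^{2}}\approx\norm{(-\frac{1}{2}I+K^{*})f}_{L^{2}}$, and since the difference of the two normal traces is $f$ itself, $\norm{f}_{L^{2}(\partial D)}\lesssim\norm{(\pm\frac{1}{2}I+K^{*})f}_{L^{2}(\partial D)}$. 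This gives injectivity with closed range. Surjectivity I would obtain by duality: the closed range of $\pm\frac{1}{2}I+K^{*}$ is the annihilator of $\ker(\pm\frac{1}{2}I+K)$, and an element of the latter generates, via the double layer potential, a harmonic function solving a homogeneous interior or exterior Dirichlet problem; uniqueness there (which can itself be read off the Rellich estimate) forces that element to vanish, modulo the locally constant functions --- the one standard Fredholm defect present on a bounded domain. Thus $\pm\frac{1}{2}I+K^{*}$ is invertible on $L^{2}(\partial D)$, and the parallel argument with $\mathcal{S}$ in place of the double layer, together with Dirichlet uniqueness inside and outside and solvability of the $L^{2}$ regularity problem, gives that $\mathcal{S}:L^{2}(\partial D)\to L^{2}_{1}(\partial D)$ is invertible.

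To pass off $L^{2}$, I would invoke that $K^{*}$ (hence its adjoint $K$) is a Calder\'on--Zygmund operator, so bounded on $L^{p}(\partial D)$ for every $1<p<\infty$ by Coifman--McIntosh--Meyer. Since $\pm\frac{1}{2}I+K^{*}$ is invertible on $L^{2}$ and bounded on the whole $L^{p}$-scale, a stability argument (the set of exponents on which an operator on a complex-interpolation scale is invertible is open) yields invertibility on $L^{p}$ for $p$ in some interval $(2-\delta,2+\delta)$; taking adjoints converts this into invertibility of $\pm\frac{1}{2}I+K$ on $L^{p}$ for $p\in(2-\delta,\infty)$. Extending the lower endpoint for $K^{*}$ (and for $\mathcal{S}$) all the way to $p\to1^{+}$ requires the Hardy-space/atomic estimates of Dahlberg--Kenig for the $L^{p}$ Neumann and regularity problems.

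I expect this last step to be the main obstacle: the $L^{2}\to L^{p}$ passage for $p$ near $2$ is soft, but reaching the endpoint $p\to1^{+}$ --- equivalently, the $H^{1}$ estimate for the associated boundary value problems --- is where the genuine work lies. The other technical point I would need to handle carefully is the Rellich identity on a merely Lipschitz boundary: justifying the integration by parts by smooth approximation and controlling the solid error terms in terms of the nontangential maximal function.
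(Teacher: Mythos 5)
The paper supplies no proof of this lemma: it is quoted as a known result from \cite{dk1} and \cite{v0}, so there is no internal argument to compare against. Your reconstruction is an accurate outline of how those references actually prove it --- the Rellich identity giving $\norm{\nabla_{T}u}_{L^{2}(\partial D)}\approx\norm{\partial u/\partial N}_{L^{2}(\partial D)}$ with constants depending only on $n$ and $\mathcal{L}$, the jump relations for $\mathcal{S}f$ turning this into the two-sided bound $\norm{f}_{L^{2}}\lesssim\norm{(\pm\frac{1}{2}I+K^{*})f}_{L^{2}}$, surjectivity via uniqueness for the dual boundary value problems, Calder\'on--Zygmund boundedness from \cite{cmm}, and the Dahlberg--Kenig atomic/$H^{1}$ estimates to push the Neumann and regularity ranges down to $p\to 1^{+}$. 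You also correctly identify that the endpoint $p\to1^{+}$ is where the real work is, and that the $2\pm\epsilon$ perturbation is soft.

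One point worth making explicit: the ``Fredholm defect'' you mention in passing is not ignorable on a bounded domain with connected boundary. There $(-\frac{1}{2}I+K)1=0$, so $-\frac{1}{2}I+K^{*}$ is invertible only from the mean-zero subspace $L^{p}_{0}(\partial D)$ onto itself (and $-\frac{1}{2}I+K$ only modulo constants); your duality step correctly detects this, which means your argument proves the standard, correct version of the result rather than the lemma exactly as it is worded here. That imprecision lies in the quoted statement, not in your proof, but since the paper later uses $\mathcal{T}_{l}=-\frac{1}{2}I+K^{*}+b_{l}\mathcal{S}$, it is the Lanzani--Shen compact perturbation (Lemma 2.2), not the bare invertibility of $-\frac{1}{2}I+K^{*}$ on all of $L^{p}$, that does the work downstream.
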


Suppose that the Robin coefficient $b$ satisfies the following conditions:\\
\begin{equation}b\geq0\,\,\, \mathrm{and}\,\,\,b\not\equiv0\,\,\mathrm{on} \,\,\partial D;  \end{equation}
\begin{equation}b\in L^{n-1}(\partial D)\,\,\mathrm{if}\,\,1<p<2\,\, \mathrm{and}\,\,n\geq4,\,\,\mathrm{or}\,\,1<p<2\,\,\mathrm{and}\,\,n=3; \end{equation}
\begin{equation}b\in L^{s}(\partial D)\,\,\mathrm{for}\,\,\mathrm{some}\,\,s>2\,\,\mathrm{if}\,\, p=2\,\,\mathrm{or}\,\, n=3.\end{equation}

Let $\mathcal{T}=-\frac{1}{2}I+K^{*}+b\mathcal{S}$, using Lemma 2.1, and some results of compact and Fredholm operators, Lanzani and Shen established the invertibility of $\mathcal{T}$ in \cite{ls} as follows
\begin{lem}{(\cite{ls})}
Suppose that $D$ is a bounded Lipschitz domain and the Robin coefficient $b$ satisfies the conditions (2.7)-(2.9), then $\mathcal{T}$ is invertible from $L^{p}(\partial D)$ onto $L^{p}(\partial D)$ for any $1<p\leq 2$.
\end{lem}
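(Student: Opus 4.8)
The plan is to realize $\mathcal{T}$ as a compact perturbation of an invertible operator and then deduce invertibility from injectivity by a Fredholm argument. First I would split $\mathcal{T}=\left(-\frac{1}{2}I+K^{*}\right)+b\mathcal{S}$. By Lemma~2.1 the operator $-\frac{1}{2}I+K^{*}$ is invertible on $L^{p}(\partial D)$ for every $1<p<2+\epsilon$, hence for every $1<p\le 2$, so one may write $\mathcal{T}=\left(-\frac{1}{2}I+K^{*}\right)\bigl(I+\left(-\frac{1}{2}I+K^{*}\right)^{-1}b\mathcal{S}\bigr)$. Consequently it suffices to show that $b\mathcal{S}\colon L^{p}(\partial D)\to L^{p}(\partial D)$ is compact: then $I+\left(-\frac{1}{2}I+K^{*}\right)^{-1}b\mathcal{S}$, and therefore $\mathcal{T}$, is Fredholm of index zero on $L^{p}(\partial D)$, and $\mathcal{T}$ will be invertible as soon as it is injective.

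Next I would establish the compactness of $b\mathcal{S}$. The ingredients are the classical boundedness $\mathcal{S}\colon L^{p}(\partial D)\to L_{1}^{p}(\partial D)$, the Sobolev embedding of $L_{1}^{p}(\partial D)$ into Lebesgue spaces on the $(n-1)$-dimensional Lipschitz boundary (compact into every exponent strictly below the critical Sobolev exponent), and Hölder's inequality, the hypotheses~(2.8)--(2.9) on $b$ being calibrated precisely so that multiplication by $b$ carries the relevant target space of $\mathcal{S}$ boundedly into $L^{p}(\partial D)$; the borderline dimension $n=3$ and the borderline exponent $p=2$ are what force the extra integrability $s>2$ in~(2.9). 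Concretely I would approximate $b$ in the relevant Lebesgue norm by bounded functions $b_{k}$; each $b_{k}\mathcal{S}$ then factors through a subcritical, hence compact, Sobolev embedding and is compact, while $\|(b-b_{k})\mathcal{S}\|_{L^{p}\to L^{p}}\to 0$ by Hölder together with the boundedness of $\mathcal{S}$. Thus $b\mathcal{S}$ is a norm limit of compact operators, hence compact. (At the endpoint $p=2$ one may alternatively argue variationally, via the sesquilinear form associated with $\mathcal{T}$ and Lax--Milgram.)

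It then remains to show $\mathcal{T}$ is injective. Suppose $\mathcal{T}f=0$ with $f\in L^{p}(\partial D)$ and set $u=\mathcal{S}f$, which is harmonic in $D$ and in $D_{-}:=\mathbb{R}^{n}\setminus\overline{D}$, has $M(\nabla u)\in L^{p}(\partial D)$, and decays like $O(|X|^{2-n})$ at infinity. Since the single layer potential is continuous across $\partial D$, both traces of $u$ equal $\mathcal{S}f$, and by the jump relations~(2.4)--(2.5) one has $\left(\frac{\partial}{\partial N}\mathcal{S}\right)_{\pm}f=\pm\frac{1}{2}f+K^{*}f$; hence $\mathcal{T}f=0$ reads $\left(\frac{\partial}{\partial N}\mathcal{S}\right)_{+}f+b\,\mathcal{S}f=0$ on $\partial D$. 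Green's first identity in $D$ (legitimate on Lipschitz domains under the above non-tangential maximal estimate) gives
\[
\int_{D}|\nabla u|^{2}\,dX=\int_{\partial D}u\left(\frac{\partial}{\partial N}\mathcal{S}\right)_{+}f\,d\sigma=-\int_{\partial D}b\,|u|^{2}\,d\sigma\le 0,
\]
because $b\ge 0$. So $\nabla u\equiv 0$ in the connected domain $D$, i.e. $u\equiv c$ there; then $bc=0$ on $\partial D$, and $b\not\equiv 0$ forces $c=0$, whence $u\equiv 0$ in $D$ and $\mathcal{S}f=u|_{\partial D}=0$. Thus $u$ is harmonic in $D_{-}$, vanishes on $\partial D$ and at infinity, so $u\equiv 0$ in $D_{-}$ by uniqueness of the exterior Dirichlet problem, and finally $f=\left(\frac{\partial}{\partial N}\mathcal{S}\right)_{-}f-\left(\frac{\partial}{\partial N}\mathcal{S}\right)_{+}f=0$. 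This proves injectivity, and with the Fredholm-index-zero property it yields invertibility of $\mathcal{T}$ on $L^{p}(\partial D)$ for $1<p\le 2$.

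The main obstacle I anticipate is the compactness of $b\mathcal{S}$: one must match the critical Sobolev exponent of $L_{1}^{p}(\partial D)$ on the $(n-1)$-dimensional boundary against the integrability thresholds in~(2.8)--(2.9) uniformly in $p\in(1,2]$, and, for $p<2$, also ensure that an $L^{p}$ element of $\ker\mathcal{T}$ is regular enough for the energy identity — both handled by the non-tangential maximal function bounds for the single layer potential and a regularity bootstrap showing $\ker\mathcal{T}$ is independent of $p$.
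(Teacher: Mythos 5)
Your argument is correct and is essentially the proof the paper attributes to Lanzani and Shen (the paper itself only cites \cite{ls} for this lemma): a Fredholm perturbation of the invertible operator $-\frac{1}{2}I+K^{*}$ supplied by Lemma~2.1, via compactness of $b\mathcal{S}$ calibrated by the hypotheses (2.8)--(2.9), followed by an energy/uniqueness argument for injectivity. The one delicate point you rightly flag --- justifying the $L^{2}$ energy identity when the kernel element is a priori only in $L^{p}$ with $p<2$ --- is handled exactly as you suggest, by showing $\ker\mathcal{T}$ is independent of $p$.
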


Moreover, they also obtained the following theorem of solvability of harmonic Robin problem in $L^{p}$ on Lipschitz domains as follows
\begin{thm}{(\cite{ls})}
Let $D$ be a bounded Lipschitz domain, the coefficient $b$ satisfy (2.7)-(2.9), then there exists a unique solution of the harmonic Robin problem (R) on $D$ with boundary data $h\in L^{p}(\partial D)$, which satisfies the estimate $\|M(\nabla u)\|_{L^{p}(\partial D)}\leq C\|h\|_{L^{p}(\partial D)}$, for any $1<p\leq 2$, and the solution can be represented as $u=\mathcal{S}(\mathcal{T}^{-1})h$.
\end{thm}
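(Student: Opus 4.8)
The plan is to prove existence by writing the solution down explicitly as a single layer potential, and to prove uniqueness by an energy identity that exploits the sign of $b$.

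\textbf{Existence.} Given $h\in L^p(\partial D)$ with $1<p\le 2$, Lemma 2.2 provides a unique $g:=\mathcal{T}^{-1}h\in L^p(\partial D)$ with $\|g\|_{L^p(\partial D)}\le C\|h\|_{L^p(\partial D)}$. Set $u:=\mathcal{S}g$, which is the claimed representation $\mathcal{S}(\mathcal{T}^{-1})h$. Since the kernel $\Gamma(|X-Q|)$ is harmonic in $X$ away from $Q$, differentiating under the integral sign gives $\Delta u=0$ in $D$. For the estimate I would invoke the classical bound $\|M(\nabla\mathcal{S}g)\|_{L^p(\partial D)}\le C\|g\|_{L^p(\partial D)}$ for single layer potentials on Lipschitz domains (a consequence of the Coifman--McIntosh--Meyer theory already underlying Lemma 2.1), which combined with the control on $g$ yields $\|M(\nabla u)\|_{L^p(\partial D)}\le C\|h\|_{L^p(\partial D)}$. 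Finally, for the boundary condition: $\mathcal{S}g$ is continuous across $\partial D$, so its interior nontangential trace is $\mathcal{S}g|_{\partial D}$, while by the jump relation (2.4) the interior nontangential trace of $\frac{\partial}{\partial N}\mathcal{S}g$ is $-\frac12 g+K^*g$. Hence
\[
\left(\frac{\partial u}{\partial N}+bu\right)_+ = -\tfrac12 g+K^*g+b\,\mathcal{S}g=\mathcal{T}g=h
\]
a.e.\ on $\partial D$, so $u$ solves (R).

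\textbf{Uniqueness.} Let $u_1,u_2$ solve (R) with the same data $h$ and both satisfy $M(\nabla u_i)\in L^p(\partial D)$, and put $w=u_1-u_2$, so $\Delta w=0$ in $D$, $\frac{\partial w}{\partial N}+bw=0$ a.e.\ on $\partial D$, and $M(\nabla w)\in L^p(\partial D)$; the last property in particular ensures that $w$ has nontangential boundary values forming a function in $L^p_1(\partial D)$. I would apply Green's first identity on an exhausting sequence of smooth subdomains $D_k\uparrow D$ and pass to the limit --- the passage justified by the a.e.\ nontangential convergence of $w$ and $\nabla w$ together with the $L^p$ bounds on their maximal functions --- to obtain
\[
\int_D|\nabla w|^2\,dX=\int_{\partial D}w\,\frac{\partial w}{\partial N}\,d\sigma=-\int_{\partial D}b\,w^2\,d\sigma\le 0,
\]
where $b\ge 0$ comes from (2.7) and the finiteness of $\int_{\partial D}b\,w^2\,d\sigma$ follows from the integrability of $b$ in (2.8)--(2.9) via the Sobolev trace embedding of $L^p_1(\partial D)$. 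Thus $\nabla w\equiv 0$ in $D$, so $w$ is constant, and then $\int_{\partial D}b\,w^2\,d\sigma=0$ together with $b\not\equiv 0$ forces that constant to be $0$; hence $u_1=u_2$.

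\textbf{Where the difficulty lies.} Granting Lemma 2.2 and the standard trace/jump calculus for single layer potentials, the existence half is essentially bookkeeping. The delicate point is uniqueness: making Green's identity rigorous on a merely Lipschitz domain with only nontangential-maximal-function control on $\nabla w$ requires a careful approximation and limiting argument, and, crucially, one must verify that the boundary term $\int_{\partial D}b\,w^2\,d\sigma$ is finite and genuinely equals $\int_{\partial D}w\,\partial_N w\,d\sigma$. This is exactly where the hypotheses (2.8)--(2.9) on $b$ --- whose exponents are tuned to the range of $p$ and the dimension $n$ through the Sobolev embedding on the $(n-1)$-dimensional boundary --- enter in an essential way; without them the energy estimate does not close.
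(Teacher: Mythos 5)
First, a remark on the comparison itself: the paper offers no proof of this statement --- it is Theorem 2.3, quoted verbatim from Lanzani--Shen \cite{ls} and used as a black box. So the only meaningful review is of your argument on its own merits and against the strategy of \cite{ls}. Your existence half is correct and is exactly the route of \cite{ls}: invert $\mathcal{T}=-\frac12 I+K^{*}+b\mathcal{S}$ via Lemma 2.2, set $u=\mathcal{S}(\mathcal{T}^{-1}h)$, and read off the Robin boundary value from the continuity of $\mathcal{S}$ across $\partial D$ together with the jump relation (2.4); the maximal function bound for $\nabla\mathcal{S}g$ is the standard Coifman--McIntosh--Meyer consequence (it is also item (6) of Lemma 2.5 with $j=1$). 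No complaints there.

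The uniqueness half has a genuine gap for $1<p<2$. Your energy identity $\int_{D}|\nabla w|^{2}\,dX=-\int_{\partial D}b\,w^{2}\,d\sigma$ presupposes two finiteness statements that do not follow from $M(\nabla w)\in L^{p}(\partial D)$ when $p<2$. First, $\nabla w$ need not lie in $L^{2}(D)$: the dyadic-collar estimate that converts $\|M(\nabla w)\|_{L^{2}(\partial D)}$ into $\|\nabla w\|_{L^{2}(D)}$ has no analogue starting from an $L^{p}$ bound with $p<2$ (there are harmonic functions with boundary data in $L^{p}_{1}\setminus H^{1/2}$, hence infinite energy), so the left side of your identity may be $+\infty$. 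Second, your claim that $\int_{\partial D}b\,w^{2}\,d\sigma<\infty$ ``follows from (2.8)--(2.9) via Sobolev trace embedding'' fails for small $p$: with $w\in L^{p^{*}}(\partial D)$, $p^{*}=\frac{(n-1)p}{n-1-p}$, and $b\in L^{n-1}(\partial D)$, H\"older gives $b\,w^{2}\in L^{1}$ only when $p\geq \frac{2(n-1)}{n}$, and the pairing $\int_{\partial D}w\,\partial_{N}w\,d\sigma$ is absolutely convergent under exactly the same restriction; the range $1<p<\frac{2(n-1)}{n}$ is not covered. So your argument establishes uniqueness essentially only for $p=2$ (where it is the standard and correct one). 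To close the full range one needs a different mechanism --- e.g., a duality argument pairing $w$ against solutions of an adjoint problem with smooth data, or first showing that any homogeneous solution with $M(\nabla w)\in L^{p}$ is itself a single layer potential $\mathcal{S}f$ with $f\in L^{p}$ and then invoking the injectivity of $\mathcal{T}$ on $L^{p}$ --- which is how this range is handled in \cite{ls} and in the Dahlberg--Kenig treatment of the Neumann problem.
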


\begin{rem}
With respect to general boundary integral method, the key idea is to establish the invertibility of boundary integral operators under appropriate conditions. Therefore the crux of Verchota, Dahlberg and Kenig's results is to find the invertibility of trace operators of the classical single and double layer potentials under necessary conditions. So do Lanzani and Shen's results. This approach is relatively valid and esay for harmonic BVPs in most cases, while it is almost impossible and difficult for polyharmonic BVPs because the prescribed boundary data are multitudinous for the latter comparing with the former. Even for harmonic BVPs, it is difficult to solve by the method of layer potentials when the prescribed boundary data increase. One example is the following mixed problem (also called Zaremba problem):
\begin{equation*}{(Z)}\hspace{5mm}\begin{cases}\Delta u=0\,\,in\,\, D,\\
 u=f\,\,on\,\,\mathcal{D},\\
 \frac{\partial u}{\partial N}=g\,\,on\,\,\mathcal{N},
  \end{cases}
  \end{equation*}
where $D$ is a domain in $\mathbb{R}^{n}$ with boundary $\partial D$, $\mathcal{D}\cup\mathcal{N}=\partial D$, both $\mathcal{D}$ and $\mathcal{N}$ are nonempty, $f,g$ are given functions in some certain function spaces respectively defined on $\mathcal{D}$ and $\mathcal{N}$. Thus it must be needed to introduce some new ideas in order to solve polyharmonic (or general higher order elliptic) BVPs by using the method of layer potentials, which will include self-developments of the theory of layer potentials.
\end{rem}

\subsection{Polyharmonic fundamental solutions and multi-layer $\mathcal{S}$-potentials}

In \cite{du}, Du explicitly represents the fundamental solutions for polyharmonic operators $\Delta^{m}$, $2\leq m<\infty$, in a new and different form although they have been presented in many earlier literatures with very vague coefficients (for instance, see \cite{acl} and \cite{sb}). Then multi-layer $\mathcal{S}$-potentials (higher order analog of the single layer potential $\mathcal{S}$) are firstly introduced in terms of these fundamental solutions. More precisely,

Set \begin{equation}
\delta_{s}=s(s+n-2)
\end{equation}
for any $s\in \mathbb{R}\setminus\{0\}$.
For $X, Y\in \mathbb{R}^{n}$, denote
\begin{equation}
\mathcal{K}_{1}(X,Y)=\frac{1}{(n-2)\omega_{n-1}}\frac{1}{|X-Y|^{n-2}},
\end{equation}
For
$m\in \mathbb{N}$ and $m\geq 2$, define
\begin{align}
\mathcal{K}_{m}(x,v)=\frac{1}{(n-2)\omega_{n-1}\gamma_{1}\gamma_{2}\cdots\gamma_{m-1}}|X-Y|^{2m-n}
\end{align}
if $n$ is odd, and
\begin{align}
\mathcal{K}_{m}(x,v)=\begin{cases}\frac{1}{(n-2)\omega_{n-1}\gamma_{1}\gamma_{2}\cdots\gamma_{m-1}}|X-Y|^{2m-n},\,\,\,\,m\leq \frac{n-2}{2},\vspace{2mm}\\
\frac{1}{(n-2)^{2}\omega_{n-1}\gamma_{1}\gamma_{2}\cdots\gamma_{\frac{n-1}{2}-1}\delta_{2}\delta_{4}\cdots\delta_{2m-n-1}}|X-Y|^{2m-n}\vspace{1mm}\\
\times\left[\log|X-Y|+\frac{1}{n}-\sum_{t=1}^{m-\frac{n}{2}}\left(\frac{1}{2t}+\frac{1}{2t+n-2}\right)\right],\,\,\,m\geq\frac{n}{2}
\end{cases}
\end{align}
if $n$ is even, where
\begin{equation}\gamma_{k}=\delta_{2k-n+2},\,\,\,k=1,2,\ldots,m-1.\end{equation}
Then
\begin{equation}
\Delta \mathcal{K}_{1}(X,Y)=0\,\,\mbox{and}\,\,\Delta
\mathcal{K}_{m}(X,Y)=\mathcal{K}_{m-1}(X,Y), \,\,m\geq2.
\end{equation}
It is also obvious to that $\mathcal{K}_{1}(X,Y)=\Gamma(X-Y)$, $\Gamma$ is the classical harmonic fundamental solution. For this reason, $\mathcal{K}_{m}$ is called the $m$th order harmonic fundamental solution, or the fundamental solution of the polyharmonic operator $\Delta^{m}$, $2\leq m<\infty$.

Denote
\begin{equation}
\mathcal{M}_{j}f(X)=\int_{\partial D} \mathcal{K}_{j}(X,Q)
f(Q)d\sigma(Q), \,\,X\in D,
\end{equation}
where $1\leq j< \infty$, $\mathcal{K}_{j}$ is the $j$th order
polyharmonic fundamental solution, $d\sigma$ is the surface measure
on $\partial D$, and $f\in L^{p}(\partial D)$ for some suitable $p$.
$\mathcal{M}_{j}f$ is called the $j$th-layer $\mathcal{S}$-potential
of $f$. Obviously, $\mathcal{M}_{1}$ is just the classical
single layer potential $\mathcal{S}$.

Multi-layer $\mathcal{S}$-potentials $\mathcal{M}_{j}$ have some nice properties stated in the following lemma.
\begin{lem}{(\cite{du})}
Let $D$ be a bounded Lipschitz domain with connected boundary $\partial D$, then\\
\rm{(1)}   $\frac{\partial}{\partial N_{P}}\mathcal{M}_{j}f(X)$ has the non-tangential limit when $X\in D$ non-tangentially approaches to almost every point $P\in \partial D$ and any $j\geq 2$. More precisely,
\begin{equation}\lim_{\substack{X\rightarrow P\\ X\in \Gamma(P)}}\frac{\partial}{\partial N_{P}}\mathcal{M}_{j}f(X)=\lim_{\substack{X\rightarrow P\\ X\in \Gamma(P)}}\int_{\partial D}\langle K_{j}(X, Q), N_{P}\rangle f(Q)d\sigma(Q)
        =\mathrm{K}_{j}^{*}f(P)\end{equation}
        for $f\in L^{p}(\partial D)$, $1\leq p\leq\infty$,
        where \begin{equation}\mathrm{K}_{j}^{*}f(P)=\int_{\partial D}\langle K_{j}(P, Q), N_{P}\rangle f(Q)d\sigma(Q),\,\,P\in\partial D;\end{equation}
\rm{(2)}  $\Delta \mathcal{M}_{j}f(X)=\mathcal{M}_{j-1}f(X)$ as $j\geq 2$, and $\Delta \mathcal{M}_{1}f(X)=0$ for any $f\in L^{p}(\partial D)$ with $1\leq p\leq\infty$;\\
\rm{(3)}  $\mathrm{K}_{j}^{*}: L^{p}(\partial D)\rightarrow L^{p}(\partial D)$ is bounded for any $1\leq p\leq\infty$ as $j\geq 2$;\\
\rm{(4)}  $\mathcal{M}_{j}: L^{p}(\partial D)\rightarrow L^{p}(D)$ is bounded for any $1\leq p\leq\infty$ as $j\geq 1$;\\
\rm{(5)}  $\mathcal{M}_{j}: L^{p}(\partial D)\rightarrow L_{1}^{p}(D)$ is bounded for any $1\leq p\leq\infty$ as $j\geq 2$;\\
\rm{(6)}  $M\circ (\nabla\mathcal{M}_{j}): L^{p_{j}}(\partial D)\rightarrow L^{p_{j}}(\partial D)$ is bounded for any $j\geq 1$ with $p_{j}\in\begin{cases}
(1, \infty), \,\,j=1;\vspace{0.5mm}\\
[1, \infty], \,\,j\geq 2.
\end{cases}$
\end{lem}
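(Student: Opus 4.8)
The plan is to derive all six assertions from pointwise size bounds on the kernels $\mathcal{K}_{j}(X,Q)$ and on their gradients $\nabla_{X}\mathcal{K}_{j}(X,Q)$, read off directly from the explicit formulas (2.11)--(2.14), together with the differential recursion (2.16). Writing $d=2j-n$, those formulas give $|\mathcal{K}_{j}(X,Q)|\lesssim|X-Q|^{d}$ when $d<0$, $|\mathcal{K}_{j}(X,Q)|\lesssim 1+\bigl|\log|X-Q|\bigr|$ when $d=0$, and $\mathcal{K}_{j}$ extends continuously to $D\times\partial D$ when $d>0$; correspondingly $|\nabla_{X}\mathcal{K}_{j}(X,Q)|\lesssim|X-Q|^{d-1}$ when $d\leq 0$, while $\nabla_{X}\mathcal{K}_{j}$ is bounded on $D\times\partial D$ when $d\geq 1$ (logarithmic corrections appearing only in borderline even--dimensional cases, and only where they are harmless). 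The decisive point is the arithmetic in dimension $n\geq 3$: for $j\geq 1$ one has $d=2j-n>-(n-1)$, and for $j\geq 2$ one has $d-1=2j-n-1>-(n-1)$, so that $|X-Q|^{d}$ (for $j\geq 1$) and $|X-Q|^{d-1}$ (for $j\geq 2$) are integrable over the $(n-1)$-dimensional surface $\partial D$, uniformly in $X$ in a neighbourhood of $\partial D$, and also integrable over the $n$-dimensional region $D$ uniformly in $Q\in\partial D$. Thus for $j\geq 2$ the gradient kernel is \emph{weakly singular}, i.e.\ genuinely integrable on $\partial D$ with no principal value needed; this is the qualitative gain over the classical single layer $\mathcal{M}_{1}=\mathcal{S}$, and it is exactly what produces the jump--free formula in part (1). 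The only place a deep fact enters is the case $j=1$ of part (6), where one invokes the classical non-tangential maximal estimate $\|M(\nabla\mathcal{S}f)\|_{L^{p}(\partial D)}\lesssim\|f\|_{L^{p}(\partial D)}$, $1<p<\infty$, coming from the $L^{p}$ boundedness of the Cauchy integral on Lipschitz graphs \cite{cmm} (see also \cite{v0,dk1}).

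For part (2): since $D$ is open and disjoint from $\partial D$, every $X$-derivative of $\mathcal{K}_{j}(\cdot,Q)$ is bounded on $E\times\partial D$ for each compact $E\subset D$, so one may differentiate under the integral sign freely on $D$; hence $\mathcal{M}_{j}f\in C^{\infty}(D)$ and, by (2.16), $\Delta_{X}\mathcal{M}_{j}f(X)=\int_{\partial D}\mathcal{K}_{j-1}(X,Q)f(Q)\,d\sigma(Q)=\mathcal{M}_{j-1}f(X)$ for $j\geq 2$, while $\Delta\mathcal{M}_{1}f=0$ because $\mathcal{K}_{1}$ is harmonic (iterating, $\Delta^{j}\mathcal{M}_{j}f=0$, so $\mathcal{M}_{j}f$ is polyharmonic). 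Part (4) then follows from Schur's test: the two one-sided integrals $\sup_{X\in D}\int_{\partial D}|\mathcal{K}_{j}(X,Q)|\,d\sigma(Q)$ and $\sup_{Q\in\partial D}\int_{D}|\mathcal{K}_{j}(X,Q)|\,dX$ are finite for all $j\geq 1$ by the exponent count above, together with $\sigma(\partial D),|D|<\infty$, giving $\mathcal{M}_{j}\colon L^{p}(\partial D)\to L^{p}(D)$ for every $1\leq p\leq\infty$. Part (5) is the same argument applied to $\nabla_{X}\mathcal{K}_{j}$: differentiating under the integral on $D$ gives $\nabla\mathcal{M}_{j}f(X)=\int_{\partial D}\nabla_{X}\mathcal{K}_{j}(X,Q)f(Q)\,d\sigma(Q)$ (the classical gradient, which on the open set $D$ is also the distributional one), and Schur's test with kernel $|\nabla_{X}\mathcal{K}_{j}(X,Q)|$, whose one-sided integrals are finite by $d-1>-(n-1)$, i.e.\ $j\geq 2$, puts $\nabla\mathcal{M}_{j}f$ in $L^{p}(D)$; combined with part (4) this is precisely $\mathcal{M}_{j}\colon L^{p}(\partial D)\to L_{1}^{p}(D)$, $1\leq p\leq\infty$. (The estimate degenerates for $j=1$ exactly as expected, since $|X-Q|^{1-n}$ is not integrable on $\partial D$; consistently, $\nabla\mathcal{S}f$ need not lie in $L^{p}(D)$.)

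For parts (1), (3) and the case $j\geq 2$ of (6): fix $P\in\partial D$ and $X\in\Gamma_{\gamma}(P)$. By the cone condition and the Lipschitz character there is $c=c(\gamma,\mathcal{L})>0$ with $|X-Q|\geq c\,|P-Q|$ for all $Q\in\partial D$, while trivially $|X-Q|\leq\mathrm{diam}(\overline{D})$; hence $\bigl|\langle\nabla_{X}\mathcal{K}_{j}(X,Q),N_{P}\rangle\bigr|\lesssim\Phi(P,Q)$, where $\Phi(P,Q)=|P-Q|^{d-1}$ if $d\leq 0$ and $\Phi$ is a constant if $d\geq 1$. In either case $\Phi$ is a symmetric kernel that is integrable on the bounded surface $\partial D$, so $\Phi(P,\cdot)\,|f|\in L^{1}(\partial D)$ for a.e.\ $P$ whenever $f\in L^{p}(\partial D)$, $1\leq p\leq\infty$ (by Schur's test applied to $\Phi$; no fractional-integral obstruction arises on a set of finite measure). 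Since for each fixed $Q\neq P$ the map $X\mapsto\langle\nabla_{X}\mathcal{K}_{j}(X,Q),N_{P}\rangle$ is continuous at $X=P$, dominated convergence yields $\lim_{X\to P,\ X\in\Gamma(P)}\frac{\partial}{\partial N_{P}}\mathcal{M}_{j}f(X)=\int_{\partial D}\langle\nabla_{P}\mathcal{K}_{j}(P,Q),N_{P}\rangle f(Q)\,d\sigma(Q)=\mathrm{K}_{j}^{*}f(P)$, an absolutely convergent integral with no principal value --- this is part (1). Finally, both $|\mathrm{K}_{j}^{*}f(P)|$ and $M(\nabla\mathcal{M}_{j}f)(P)=\sup_{X\in\Gamma(P)}|\nabla_{X}\mathcal{M}_{j}f(X)|$ are dominated by $\int_{\partial D}\Phi(P,Q)|f(Q)|\,d\sigma(Q)$, and Schur's test for the symmetric integrable kernel $\Phi$ makes this operator bounded on $L^{p}(\partial D)$ for every $1\leq p\leq\infty$; this gives part (3) and the case $j\geq 2$ of (6), the case $j=1$ of (6) being the classical estimate recalled above (valid only for $1<p<\infty$).

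The routine content is bookkeeping: extracting the kernel bounds from (2.11)--(2.14), checking the exponent inequalities, and running Schur's test and dominated convergence. The one genuinely delicate point in the higher-order ($j\geq 2$) cases is making the domination in part (1) uniform over the \emph{entire} approach region $\Gamma(P)$, so that the limit may be passed inside the integral and produces \emph{no} jump term --- in sharp contrast with the $j=1$ jump relations (2.4)--(2.5) --- and this is exactly where the local integrability of $\nabla_{X}\mathcal{K}_{j}$ on $\partial D$ for $j\geq 2$ is indispensable. For $j=1$, the genuinely hard ingredient is the cited non-tangential maximal bound for $\nabla\mathcal{S}$, which I take as known.
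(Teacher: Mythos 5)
This lemma is imported verbatim from \cite{du}; the present paper contains no proof of it, so there is no internal argument to compare against. Judged on its own, your proof is correct and follows the standard (and, in the cited source, the actual) route: the decisive observation is the exponent count $2j-n-1>-(n-1)$ for $j\geq 2$, which makes $\nabla_X\mathcal{K}_j(X,Q)$ weakly singular on the $(n-1)$-dimensional boundary, so that parts (1), (3), (5) and the $j\geq 2$ case of (6) reduce to dominated convergence and Schur's test, with no jump term and no principal value, while all the genuinely hard analysis is concentrated in $j=1$, where you correctly invoke the Coifman--McIntosh--Meyer/Verchota non-tangential maximal estimate and correctly restrict to $1<p<\infty$. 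Your uniform bound $|X-Q|\geq c\,|P-Q|$ for $X\in\Gamma_{\gamma}(P)$ is exactly the device needed to make the domination uniform over the whole approach region, and your remark that the logarithmic factors in the even-dimensional borderline cases never occur at a critical exponent is the right way to dispose of them.
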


\section{$L^{p}$ polyharmonic Robin problems on Lipschitz domains}

In this section, we will solve the following polyharmonic Robin problems with $L^{p}$ boundary data on Lipschitz domains:
\begin{equation}\hspace{5mm}\begin{cases}\Delta^{m} u=0\,\,in\,\, D,\\
 \frac{\partial }{\partial N}\Delta^{j}u+b_{j}\Delta^{j}u=h_{j}\,\,on\,\,\partial D,\\
M(\nabla u)\in L^{p}(\partial D)
  \end{cases}
  \end{equation}
with $\|M(\nabla u)\|_{L^{p}(\partial D)}\leq C\sum_{j=0}^{m-1}\|h_{j}\|_{L^{p}(\partial D)}$, where $D$ is a bounded Lipschitz domain, $\Delta$ is the Laplacian, $h_{j}\in L^{p}(\partial D)$  with some $p\in (1,\infty)$, the Robin coefficients $b_{j}$ satisfies some appropriate conditions for any $0\leq j<m$, the constant $C$ in the estimate depends only on $m,n,p$ and the Lipschitz character of $D$.

Denote Robin boundary operators $R_{b_{j}}=\frac{\partial }{\partial N_{P}}+b_{j}(P)=\left<\nabla, N_{P}\right>+b_{j}(P)$, $0\leq j<m$, then more precisely, we have
\begin{thm}
Let $D$ be a bounded Lipschitz domain, $h_{j}\in L^{p}(\partial D)$ with $1<p\leq 2$, and the Robin coefficients $b_{j}$ satisfy the conditions (2.7)-(2.9) for any $0\leq j<m$, then there exists a solution for the polyharmonic Robin problem (3.1), and the solution can be represented as
\begin{align}
u(X)&=\sum_{j=1}^{m}\int_{\partial D}\mathcal{K}_{j}(X,
Q)\widetilde{h}_{j-1}(Q)d\sigma(Q),\\
&=\sum_{j=1}^{m}\mathcal{M}_{j}\widetilde{h}_{j-1}(X),\,\, X\in D,\nonumber
\end{align}
where
\begin{equation}\widetilde{h}_{m-1}=\mathcal{T}_{m-1}^{-1}h_{m-1}\end{equation}
and
\begin{equation}\widetilde{h}_{l}=\mathcal{T}_{l}^{-1}\left(h_{l}-\sum_{j=l+2}^{m}\mathrm{T}_{l,j-l}\widetilde{h}_{j-1}\right)\end{equation}
with $0\leq l\leq m-2$, which satisfies the following estimates
\begin{equation}
\|M(\nabla u)\|_{L^{p}(\partial D)}\leq C\sum_{j=0}^{m-1}\|h_{j}\|_{L^{p}(\partial D)},
\end{equation}
\begin{equation}
\|u\|_{L^{p}(D)}\leq C\sum_{j=0}^{m-1}\|h_{j}\|_{L^{p}(\partial D)}
\end{equation}
and
\begin{equation}
\|\nabla(u-\mathcal{M}_{1}\widetilde{h}_{0})\|_{L^{p}(D)}\leq C\sum_{j=1}^{m-1}\|h_{j}\|_{L^{p}(\partial D)}
\end{equation}
in which $M(\nabla u)$ is the non-tangential maximal function of $\nabla u$ on $\partial D$, $\mathcal{T}_{l}=-\frac{1}{2}I+K^{*}+b_{l}\mathcal{S}$, and $\mathrm{T}_{l,j-l}=(R_{b_{l}}\circ \mathcal{M}_{j-l})\upharpoonright_{\partial D}=K^{*}_{j-l}+b_{l}\mathcal{M}_{j-l}$, $0\leq l<m$, $l+2\leq j\leq m$.
The solution (3.2) with (3.3) and (3.4) is unique under (3.6), and unique up to a constant under (3.5) or (3.7).
\end{thm}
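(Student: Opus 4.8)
The plan is to reduce the polyharmonic Robin problem to a triangular system of harmonic Robin problems by exploiting the key property $\Delta \mathcal{M}_j f = \mathcal{M}_{j-1} f$ (Lemma~2.6(2)) together with the invertibility of $\mathcal{T}_l$ from Lemma~2.4 and the existence theorem for the harmonic Robin problem (Theorem~2.5). First I would posit the ansatz $u = \sum_{j=1}^m \mathcal{M}_j \widetilde h_{j-1}$ with unknown densities $\widetilde h_0, \dots, \widetilde h_{m-1} \in L^p(\partial D)$, and check $\Delta^m u = 0$ in $D$: iterating $\Delta$ and using $\Delta \mathcal{M}_1 = 0$ shows each summand $\mathcal{M}_j \widetilde h_{j-1}$ is polyharmonic of order $j \le m$, hence $\Delta^m u = 0$. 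Next, for fixed $j'$ with $0 \le j' < m$, I would compute $\Delta^{j'} u = \sum_{j=j'+1}^m \mathcal{M}_{j-j'} \widetilde h_{j-1}$ (the terms with $j \le j'$ die), so that the $j'$-th boundary condition $R_{b_{j'}} \Delta^{j'} u = h_{j'}$ becomes, upon taking the non-tangential trace from inside $D$ and using (2.4) (the jump relation for $\partial_N \mathcal S$) for the $\mathcal M_1$-term and Lemma~2.6(1) for the higher terms,
\begin{equation*}
\mathcal{T}_{j'} \widetilde h_{j'} + \sum_{j=j'+2}^m \mathrm{T}_{j',\,j-j'} \widetilde h_{j-1} = h_{j'},
\end{equation*}
where $\mathcal T_{j'} = -\tfrac12 I + K^* + b_{j'}\mathcal S$ and $\mathrm T_{j',k} = K_k^* + b_{j'}\mathcal M_k$. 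This is an upper-triangular system: the top equation $\mathcal T_{m-1}\widetilde h_{m-1} = h_{m-1}$ solves for $\widetilde h_{m-1}$ via Lemma~2.4, and then back-substitution gives the recursion (3.4) for $\widetilde h_l$, $l$ decreasing from $m-2$ to $0$, each step again invoking invertibility of $\mathcal T_l$ on $L^p(\partial D)$, $1<p\le 2$.

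The estimates follow by bookkeeping through the recursion. From (3.3)--(3.4), boundedness of $\mathcal T_l^{-1}$ on $L^p(\partial D)$ (Lemma~2.4), boundedness of $K_k^*$ (Lemma~2.6(3)) and of $\mathcal M_k : L^p(\partial D) \to L^p(D) \hookrightarrow$ (and the $L^s$-integrability of $b_l$ feeding into the boundedness of $b_l \mathcal M_k$ as an operator into $L^p$, via Hölder), one obtains by downward induction on $l$ that $\|\widetilde h_l\|_{L^p(\partial D)} \le C \sum_{j=l}^{m-1}\|h_j\|_{L^p(\partial D)}$. Then $\|M(\nabla u)\|_{L^p(\partial D)} \le \sum_{j=1}^m \|M(\nabla \mathcal M_j \widetilde h_{j-1})\|_{L^p(\partial D)} \le C \sum_{j=1}^m \|\widetilde h_{j-1}\|_{L^p(\partial D)} \le C \sum_{j=0}^{m-1}\|h_j\|_{L^p(\partial D)}$ using Lemma~2.6(6); estimate (3.6) for $\|u\|_{L^p(D)}$ uses Lemma~2.6(4); and (3.7) uses that $u - \mathcal M_1 \widetilde h_0 = \sum_{j=2}^m \mathcal M_j \widetilde h_{j-1}$ with $\mathcal M_j : L^p(\partial D)\to L_1^p(D)$ bounded for $j\ge 2$ (Lemma~2.6(5)), together with the observation that $\widetilde h_1, \dots, \widetilde h_{m-1}$ depend only on $h_1, \dots, h_{m-1}$ (not on $h_0$), by the triangular structure.

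For uniqueness I would argue by energy/Green's identity on the harmonic components. Suppose $u$ solves (3.1) with all $h_j = 0$ and $M(\nabla u) \in L^p(\partial D)$. Set $w_0 = \Delta^{m-1} u$, which is harmonic in $D$ and satisfies the homogeneous harmonic Robin condition $\partial_N w_0 + b_{m-1} w_0 = 0$ on $\partial D$; since $M(\nabla w_0)$ is controlled (one needs $w_0 = \Delta^{m-1}u$ to have the requisite non-tangential maximal control, which follows because the harmonic component sits at the bottom of the same layer-potential decomposition) the uniqueness part of Theorem~2.5 forces $w_0 \equiv 0$, i.e. $\Delta^{m-1} u = 0$. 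Descending, $w_1 = \Delta^{m-2} u$ is now harmonic with $\partial_N w_1 + b_{m-2} w_1 = 0$, so $w_1 \equiv 0$; iterating, $\Delta^j u = 0$ for all $j$, and finally $u$ itself is harmonic with $\partial_N u + b_0 u = 0$, whence $u \equiv 0$ (or $u \equiv$ const in the degenerate reading where only $M(\nabla u)$, not $M(u)$, is prescribed — matching the stated "unique up to a constant under (3.5) or (3.7)"). The main obstacle I anticipate is precisely this uniqueness step: making rigorous that $\Delta^j u$ inherits enough non-tangential regularity from the hypothesis $M(\nabla u) \in L^p(\partial D)$ to legitimately apply Theorem~2.5 at each stage — in the higher-order setting one does not automatically control $M(\nabla \Delta^j u)$ from $M(\nabla u)$, so one likely must either strengthen the a priori class of admissible solutions or reprove uniqueness via a direct Green-type integration by parts adapted to the polyharmonic operator, using the nonnegativity and non-triviality of each $b_j$ to kill boundary terms.
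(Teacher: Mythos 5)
Your argument is essentially the paper's own proof: the same ansatz $u=\sum_{j=1}^{m}\mathcal{M}_{j}\widetilde{h}_{j-1}$, the same upper-triangular system obtained by applying $\Delta^{j}$ and then the Robin trace operators (via the jump relation for $\partial\mathcal{S}/\partial N$ and Lemma 2.5(1)), the same back-substitution using the invertibility of $\mathcal{T}_{l}$ from Lemma 2.2, and the same bookkeeping through Lemma 2.5 for the estimates (3.5)--(3.7). On uniqueness the paper offers nothing beyond the assertion that it ``is easy to get,'' so the difficulty you correctly flag --- that the hypothesis $M(\nabla u)\in L^{p}(\partial D)$ does not by itself control $M(\nabla\Delta^{j}u)$, which would be needed to apply the harmonic Robin uniqueness theorem (Theorem 2.3) at each stage of your descent --- is a gap left open in the paper as well, not a defect of your proposal relative to it.
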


\begin{proof}
For the existence, denote the solution in form as
\begin{equation}
u(X)=\mathcal{M}_{1}\widetilde{h}_{0}(X)+\mathcal{M}_{2}\widetilde{h}_{1}(X)+\cdots+\mathcal{M}_{m}\widetilde{h}_{m-1}(X)
\end{equation}
for some functions $\widetilde{h}_{j}$, $0\leq j\leq m-1$ to be given below, where $\mathcal{M}_{j}$ is the $j$th-layer $\mathcal{S}$-potential.

Taking the polyharmonic operators $\Delta^{l}$, $0\leq
l\leq m$, on two sides of (3.8), in form, we have
\begin{align}
\begin{cases}
u(X)&=\mathcal{M}_{1}\widetilde{h}_{0}(X)+\mathcal{M}_{2}\widetilde{h}_{1}(X)+\mathcal{M}_{3}\widetilde{h}_{2}(X)+\cdots+\mathcal{M}_{m}\widetilde{h}_{m-1}(X),\vspace{1mm}\\
\Delta u(X)&=\mathcal{M}_{1}\widetilde{h}_{1}(X)+\mathcal{M}_{2}\widetilde{h}_{2}(X)+\cdots+\mathcal{M}_{m-1}\widetilde{h}_{m-1}(X),\nonumber\vspace{1mm}\\
\Delta^{2} u(X)&=\mathcal{M}_{1}\widetilde{h}_{2}(X)+\cdots+\mathcal{M}_{m-2}\widetilde{h}_{m-1}(X),\nonumber\vspace{1mm}\\
&\cdots\nonumber\vspace{1mm}\\
\Delta^{m-1} u(X)&=\mathcal{M}_{1}\widetilde{h}_{m-1}(X),\nonumber\vspace{1mm}\\
\Delta^{m} u(X)&=0.
\end{cases}
\end{align}
Thus for almost everywhere $P\in \partial D$ and $X\in\Gamma(P)$, let the Robin differential operators $R_{b_{j}}$, $0\leq j<m$, acting respectively on two sides of the former $m$ identities in the above system, viz.,
\begin{align}
\begin{cases}
\left(\frac{\partial }{\partial N_{P}}+b_{0}(P)\right)u(X)&=\left(\frac{\partial }{\partial N_{P}}+b_{0}(P)\right)\left[\mathcal{M}_{1}\widetilde{h}_{0}(X)+\cdots+\mathcal{M}_{m}\widetilde{h}_{m-1}(X)\right],\vspace{1mm}\\
\left(\frac{\partial }{\partial N_{P}}+b_{1}(P)\right)\Delta u(X)&=\left(\frac{\partial }{\partial N_{P}}+b_{1}(P)\right)\left[\mathcal{M}_{1}\widetilde{h}_{1}(X)+\cdots+\mathcal{M}_{m-1}\widetilde{h}_{m-1}(X)\right],\nonumber\vspace{1mm}\\
\left(\frac{\partial }{\partial N_{P}}+b_{2}(P)\right)\Delta^{2} u(X)&=\left(\frac{\partial }{\partial N_{P}}+b_{2}(P)\right)\left[\mathcal{M}_{1}\widetilde{h}_{2}(X)+\cdots+\mathcal{M}_{m-2}\widetilde{h}_{m-1}(X)\right],\nonumber\vspace{1mm}\\
&\cdots\nonumber\vspace{1mm}\\
\left(\frac{\partial }{\partial N_{P}}+b_{m-1}(P)\right)\Delta^{m-1} u(X)&=\left(\frac{\partial }{\partial N_{P}}+b_{m-1}(P)\right)\mathcal{M}_{1}\widetilde{h}_{m-1}(X).\nonumber
\end{cases}
\end{align}
Taking $X\in D$ converge to $P\in\partial D$ non-tangentially, together with the boundary data of (3.1), then
\begin{align}
\begin{cases}
h_{0}(P)&=\mathcal{T}_{0}\widetilde{h}_{0}(P)+\mathrm{T}_{0,2}\widetilde{h}_{1}(P)+\mathrm{T}_{0,3}\widetilde{h}_{2}(P)+\cdots+\mathrm{T}_{0,m}\widetilde{h}_{m-1}(P),\vspace{1mm}\\
h_{1}(P)&=\mathcal{T}_{1}\widetilde{h}_{1}(P)+\mathrm{T}_{1,2}\widetilde{h}_{2}(P)+\cdots+\mathrm{T}_{1,m-1}\widetilde{h}_{m-1}(P),\nonumber\vspace{1mm}\\
h_{2}(P)&=\mathcal{T}_{2}\widetilde{h}_{2}(P)+\cdots+\mathrm{T}_{m-1,m-2}\widetilde{h}_{m-1}(P),\nonumber\vspace{1mm}\\
&\cdots\nonumber\vspace{1mm}\\
h_{m-1}(P)&=\mathcal{T}_{m-1}\widetilde{h}_{m-1}(P).\nonumber
\end{cases}
\end{align}

By the invertible property of $\mathcal{T}$ and $L^{p}$ boundness of $\mathrm{T}_{l,j-l}$, $0\leq l<m$, $l+2\leq j\leq m$, then
\begin{align}
\begin{cases}
\widetilde{h}_{0}(P)&=\mathcal{T}_{0}^{-1}\left[h_{0}(P)-\mathrm{T}_{0,2}\widetilde{h}_{1}(P)-\mathrm{T}_{0,3}\widetilde{h}_{2}(P)-\cdots-\mathrm{T}_{0,m}\widetilde{h}_{m-1}(P)\right],\vspace{1mm}\\
\widetilde{h}_{1}(P)&=\mathcal{T}_{1}^{-1}\left[h_{1}(P)-\mathrm{T}_{1,2}\widetilde{h}_{2}(P)-\cdots-\mathrm{T}_{1,m-1}\widetilde{h}_{m-1}(P)\right],\vspace{1mm}\nonumber\\
\widetilde{h}_{2}(P)&=\mathcal{T}_{2}^{-1}\left[h_{2}(P)-\cdots-\mathrm{T}_{2,m-2}\widetilde{h}_{m-1}(P)\right],\vspace{1mm}\nonumber\\
&\cdots\nonumber\\
\widetilde{h}_{m-1}(P)&=\mathcal{T}_{m-1}^{-1}h_{m-1}(P).\nonumber
\end{cases}
\end{align}

Therefore, we obtain
\begin{align}
\begin{cases}
\widetilde{h}_{m-1}=\mathcal{T}_{m-1}^{-1}h_{m-1},\vspace{2mm}\\
\widetilde{h}_{l}=\mathcal{T}_{l}^{-1}\left[h_{l}-\sum_{j=l+2}^{m}\mathrm{T}_{l,j-l}\widetilde{h}_{j-1}\right],
\end{cases}
\end{align}
where $0\leq l\leq m-2$.
Uniformly write
\begin{equation}
\widetilde{h}_{l}=\mathcal{T}_{l}^{-1}\left(h_{l}-\sum_{j=l+2}^{m}\mathrm{T}_{l,j-l}\widetilde{h}_{j-1}\right)
\end{equation}
with $0\leq l\leq m-1$ in view of the convention that $\sum_{j=l}^{k}s_{j}=0$ as $k<l$.

By Lemmas 2.2 and 2.4, it is easy to see that the formal reasoning in the above makes sense whenever $h_{j}\in L^{p}(\partial D)$, $1\leq j\leq m-1$, $1<p\leq 2$. That is, a solution of (3.1) is (3.2) with (3.3) and (3.4).

Next we turn to the estimate and uniqueness of the solution. By Lemmas 2.2, 2.3 and 2.5, we have
\begin{align}\|M(\nabla u)\|_{L^{p}(\partial D)}&=\|M\left(\sum_{j=1}^{m}\nabla \mathcal{M}_{j}\widetilde{h}_{j-1}\right)\|_{L^{p}(\partial D)}\\
&\leq \sum_{j=1}^{m}\|M(\nabla \mathcal{M}_{j}\widetilde{h}_{j-1}) \|_{L^{p}(\partial D)}\nonumber\\
&\leq C\sum_{j=0}^{m-1}\| \widetilde{h}_{j} \|_{L^{p}(\partial D)}\nonumber\\
&= C\sum_{j=0}^{m-1}\| \mathcal{T}_{j}^{-1}\left(h_{j}-\sum_{s=j+2}^{m}\mathrm{T}_{j,s-j}\widetilde{h}_{s-1}\right) \|_{L^{p}(\partial D)}\nonumber\\
&\leq \widetilde{C}\sum_{j=0}^{m-1}\| h_{j} \|_{L^{p}(\partial D)}\nonumber
\end{align}
where $1<p\leq 2$, and the constants $C$ and $\widetilde{C}$ depends only on $m,n,p$ and $D$.

Similarly, by Lemmas 2.2, 2,3 and 2.5, we can also obtain the estimates (3.6) and (3.7). It is easy to get that the solution (3.2) is unique under (3.6), and unique up to a constant under (3.5) or (3.7). The proof is complete.
\end{proof}

\section{The $C^{1}$ domains case}

When Lipschitz domains $D$ are replaced by $C^{1}$ domains, the polyharmonic Robin problems (3.1) can be also solved with boundary data in $L^{p}(\partial D)$ for any $1<p<\infty$.

\begin{thm}
Let $D$ be a bounded $C^{1}$ domain. For any $0\leq j<m$,  $h_{j}\in L^{p}(\partial D)$ with $1<p<\infty$, the Robin coefficients $b_{j}$ satisfy the conditions (2.7) and
\begin{equation}
b_{j}\in L^{s}(\partial D)\,\,\,\mathrm{with}\,\,\,s=\begin{cases}n-1,\,\,\,\,\mathrm{when}\,\, 1<p<n-1;\\
n-1+\epsilon,\,\,\,\,\mathrm{when}\,\, p=n-1;\\
p,\,\,\,\,\,\mathrm{when}\,\,p>n-1\end{cases}
\end{equation}
where $\epsilon >0$, then there exists a solution for the polyharmonic Robin problem (3.1), and the solution can be represented as
\begin{align}
u(X)&=\sum_{j=1}^{m}\int_{\partial D}\mathcal{K}_{j}(X,
Q)\widetilde{h}_{j-1}(Q)d\sigma(Q),\\
&=\sum_{j=1}^{m}\mathcal{M}_{j}\widetilde{h}_{j-1}(X),\,\, X\in D,\nonumber
\end{align}
where
\begin{equation}\widetilde{h}_{m-1}=\mathcal{T}_{m-1}^{-1}h_{m-1}\end{equation}
and
\begin{equation}\widetilde{h}_{l}=\mathcal{T}_{l}^{-1}\left(h_{l}-\sum_{j=l+2}^{m}\mathrm{T}_{l,j-l}\widetilde{h}_{j-1}\right)\end{equation}
with $0\leq l\leq m-2$, which satisfies the following estimates
\begin{equation}
\|M(\nabla u)\|_{L^{p}(\partial D)}\leq C\sum_{j=0}^{m-1}\|h_{j}\|_{L^{p}(\partial D)},
\end{equation}
\begin{equation}
\|u\|_{L^{p}(D)}\leq C\sum_{j=0}^{m-1}\|h_{j}\|_{L^{p}(\partial D)}
\end{equation}
and
\begin{equation}
\|\nabla(u-\mathcal{M}_{1}\widetilde{h}_{0})\|_{L^{p}(D)}\leq C\sum_{j=1}^{m-1}\|h_{j}\|_{L^{p}(\partial D)}
\end{equation}
in which $M(\nabla u)$ is the non-tangential maximal function of $\nabla u$ on $\partial D$, $\mathcal{T}_{l}=-\frac{1}{2}I+K^{*}+b_{l}\mathcal{S}$, and $\mathrm{T}_{l,j-l}=(R_{b_{l}}\circ \mathcal{M}_{j-l})\upharpoonright_{\partial D}=K^{*}_{j-l}+b_{l}\mathcal{M}_{j-l}$, $0\leq l<m$, $l+2\leq j\leq m$.
The solution (4.2) with (4.3) and (4.4) is unique under (4.6), and unique up to a constant under (4.5) or (4.7).
\end{thm}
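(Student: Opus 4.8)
The plan is to mirror, almost verbatim, the argument already carried out for Theorem 3.1, replacing every invocation of the Lipschitz-domain results by their $C^{1}$-domain counterparts. The whole proof hinges on three ingredients: (i) the invertibility of the harmonic Robin operator $\mathcal{T}=-\tfrac12 I+K^{*}+b\mathcal{S}$ on $L^{p}(\partial D)$; (ii) the mapping and jump-relation properties of the multi-layer $\mathcal{S}$-potentials $\mathcal{M}_{j}$ collected in Lemma 2.5; and (iii) the $L^{p}$ boundedness of the auxiliary operators $\mathrm{T}_{l,j-l}=K^{*}_{j-l}+b_{l}\mathcal{M}_{j-l}$. Once (i)--(iii) are available for the wider range $1<p<\infty$, the algebraic manipulation is identical to the one in Section 3.

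First I would record the key new input: on a bounded $C^{1}$ domain the operators $\pm\tfrac12 I+K^{*}$ (and $\pm\tfrac12 I+K$) are invertible on $L^{p}(\partial D)$ for \emph{every} $1<p<\infty$, and $\mathcal{S}\colon L^{p}(\partial D)\to L^{p}_{1}(\partial D)$ is invertible for every $1<p<\infty$ as well; this is the Fabes--Jodeit--Rivi\`ere theory, which improves Lemma 2.1 in the $C^{1}$ setting because on $C^{1}$ domains $K^{*}$ is a \emph{compact} operator on $L^{p}$, so $\pm\tfrac12 I+K^{*}$ is Fredholm of index zero for all $p$ and injectivity gives invertibility. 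With this in hand, the Lanzani--Shen argument for the invertibility of $\mathcal{T}=-\tfrac12 I+K^{*}+b\mathcal{S}$ goes through on $C^{1}$ domains for all $1<p<\infty$, provided $b$ lies in the Lebesgue class dictated by Sobolev embedding on $\partial D$; this is exactly what condition (4.1) encodes (so that $b\mathcal{S}$ is compact on $L^{p}$: for $p>n-1$ one has $\mathcal{S}f\in L^{\infty}$ by Sobolev embedding $L^{p}_{1}(\partial D)\hookrightarrow C^{0,\alpha}$, and multiplication by $b\in L^{p}$ is then bounded into $L^{p}$ and, composed with the smoothing $\mathcal{S}$, compact; for $1<p<n-1$ one uses $b\in L^{n-1}$ together with $\mathcal{S}f\in L^{p^{*}_{1}}$ and H\"older; the endpoint $p=n-1$ needs the extra $\epsilon$). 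I would state this as a lemma (the $C^{1}$ analogue of Lemmas 2.1--2.2) and then cite it.

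With the $C^{1}$ versions of Lemmas 2.1, 2.2 and the (range-independent) Lemma 2.5 in place, the proof proper is a copy of the proof of Theorem 3.1: write the ansatz $u=\sum_{j=1}^{m}\mathcal{M}_{j}\widetilde h_{j-1}$, apply $\Delta^{l}$ to get the triangular system using $\Delta\mathcal{M}_{j}=\mathcal{M}_{j-1}$ (part (2) of Lemma 2.5), take non-tangential limits of $R_{b_{l}}\Delta^{l}u$ using the jump relation (2.5) for the $j=1$ term and the continuous trace $\mathrm{K}^{*}_{j-l}$ from (2.19) for the $j\ge 2$ terms, arriving at the lower-triangular system with diagonal entries $\mathcal{T}_{l}$ and off-diagonal entries $\mathrm{T}_{l,j-l}$; then back-substitute from the bottom, which is legitimate because each $\mathcal{T}_{l}$ is invertible on $L^{p}(\partial D)$ and each $\mathrm{T}_{l,j-l}$ maps $L^{p}$ to $L^{p}$ — this yields formulas (4.3)--(4.4). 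The a priori estimates (4.5)--(4.7) follow by the same chain of inequalities as (3.10), using part (6) of Lemma 2.5 for $\|M(\nabla\mathcal{M}_{j}\widetilde h_{j-1})\|_{L^{p}}$ (valid for all $1<p<\infty$ when $j=1$ and all $1\le p\le\infty$ when $j\ge2$), parts (4)--(5) for the $L^{p}(D)$ bounds, and the operator norm bounds of $\mathcal{T}_{l}^{-1}$ and $\mathrm{T}_{l,j-l}$ to pass from $\|\widetilde h_{j}\|_{L^{p}}$ to $\sum\|h_{j}\|_{L^{p}}$. Uniqueness under (4.6), and uniqueness up to a constant under (4.5) or (4.7), is inherited from the corresponding harmonic uniqueness statement (Theorem 2.3 extended to $C^{1}$ domains and all $p$) applied iteratively to $\Delta^{m-1}u,\Delta^{m-2}u,\dots$; the constant ambiguity enters only at the last stage because a nonzero constant is killed by $\nabla$ and by every $\Delta^{j}$ with $j\ge1$, so it is invisible to (4.5) and (4.7) but seen by (4.6).

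The only genuine obstacle is verifying that the hypotheses on $b_{j}$ in (4.1) are exactly the ones needed to make $b_{j}\mathcal{S}$ (hence $\mathcal{T}_{j}$) a compact perturbation of the invertible operator $-\tfrac12 I+K^{*}$ on $L^{p}(\partial D)$ for the full range $1<p<\infty$, and that $\mathcal{T}_{j}$ is injective there. Injectivity is the delicate point: one argues as in Lanzani--Shen that if $\mathcal{T}_{j}f=0$ then the single-layer solution $v=\mathcal{S}f$ solves the homogeneous harmonic Robin problem, and by the $L^{p}$ uniqueness for the harmonic Robin problem on $C^{1}$ domains (which holds for all $1<p<\infty$, using that $b_{j}\ge0$, $b_{j}\not\equiv0$, via a Green's-identity/Rellich argument) one gets $v\equiv0$ in $D$, whence $v\equiv$ const in $\mathbb R^{n}\setminus\overline D$ by the decay of $\mathcal{S}f$ and harmonicity, the constant being zero by decay at infinity, so $f=\big(\tfrac{\partial v}{\partial N}\big)_{-}-\big(\tfrac{\partial v}{\partial N}\big)_{+}=0$. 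I would carry this out explicitly only if it is not already quotable from \cite{ls}; the rest is bookkeeping identical to Section 3. The proof then concludes by assembling these pieces exactly as in the proof of Theorem 3.1.
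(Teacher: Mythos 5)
Your proposal is correct and follows the same route as the paper: establish invertibility of $\mathcal{T}_{j}$ on $L^{p}(\partial D)$ for all $1<p<\infty$ in the $C^{1}$ setting via the Lanzani--Shen argument (compactness of $K^{*}$ on $C^{1}$ domains plus the conditions (2.7) and (4.1) making $b_{j}\mathcal{S}$ a compact perturbation), and then repeat the proof of Theorem 3.1 verbatim. In fact the paper's own proof consists of exactly this two-sentence outline, so your write-up supplies substantially more of the supporting detail (the Fabes--Jodeit--Rivi\`ere input, the Sobolev-embedding role of (4.1), and the injectivity argument) than the published version does.
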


\begin{proof}
It is easy to show that the operators $\mathcal{T}_{j}: L^{p}(\partial D)\rightarrow L^{p}(\partial D)$, $0\leq j\leq m-1$, are invertible when the Robin coefficient $b_{j}$ satisfies the conditions (2.7) and (4.1) by a similar argument stated in \cite{ls}. Noting this fact, the theorem is established following from a argument similar to Theorem 3.1.
\end{proof}

% ----------------------------------------------------------------
%\bibliographystyle{amsplain}
%\bibliography{Zhihua Du, Guoan Guo, Ning Wang,}Zhihua Du, Guoan Guo, Ning Wang,

\end{document}